\documentclass[11pt,a4paper,leqno]{amsart}

\usepackage[latin1]{inputenc}
\usepackage[T1]{fontenc}
\usepackage{amsfonts}
\usepackage{amsmath}
\usepackage{amssymb}
\usepackage{eurosym}
\usepackage{mathrsfs}
\usepackage{palatino}
\usepackage{color}
\usepackage{xcolor}
\usepackage{esint}
\usepackage{url}
\usepackage{hyperref}

\newcommand{\R}{\mathbb{R}}

\newcommand{\C}{\mathbb{C}}

\newcommand{\wt}{\widetilde}
\numberwithin{equation}{section}

\newcommand{\ud}[0]{\,\mathrm{d}}

\newcommand{\dist}[0]{\operatorname{dist}}
\newcommand{\Osc}[0]{\operatorname{Osc}}

\newcommand{\abs}[1]{|#1|}

\newcommand{\Babs}[1]{\Big|#1\Big|}

\newcommand{\BMO}[0]{\operatorname{BMO}}

\newcommand{\loc}[0]{\operatorname{loc}}

\newcommand{\sign}[0]{\operatorname{sgn}}

\newcommand{\eps}[0]{\varepsilon}

\theoremstyle{plain}
\newtheorem{thm}{Theorem}[section]
\newtheorem{lem}[thm]{Lemma}
\newtheorem{prop}[thm]{Proposition}

\theoremstyle{definition}

\theoremstyle{remark}
\newtheorem{rem}[thm]{Remark}

\title{Complex median method}

\author[J. Li]{Jinsong Li}

\address[J. Li]{Center for Applied Mathematics, Tianjin University, Weijin Road 92, 300072 Tianjin, China}

\email{ljs@tju.edu.cn}
\author[K. Li]{Kangwei Li}
\address[K. Li]{School of Mathematical Sciences, Zhejiang Normal University, Jinhua 321004, China}
\email{kangwei.li@zjnu.edu.cn}

\makeatletter
\@namedef{subjclassname@2020}{%
  \textup{2020} Mathematics Subject Classification}
\makeatother

\subjclass[2020]{42B20}
\keywords{}

\begin{document}

\allowdisplaybreaks

\begin{abstract}
In this paper, we complete the characterization of the boundedness of higher order commutators of  pointwise multiplication and a large class of singular integral operators, when the corresponding kernels satisfy certain non-degenerate conditions. This was known with the restriction that the pointwise multiplier is real-valued. We finally remove this restriction by proposing a novel complex median method. We also showcase the flexibility of this method by proving the necessity of the rectangular product BMO for the bi-commutators. 
\end{abstract}

\maketitle
\section{introduction}
Given a locally integrable function $b$ and a singular integral operator $T$, we may formally define 
the commutator $[b, T]$ as 
\[
[b, T]f=bTf-T(bf).
\]
We are interested in studying the lower bound of commutators, i.e., assuming $[b, T]$ is bounded 
on $L^p$, can we say anything about the pointwise multiplier $b$? The same question for higher order cases--we may define inductively 
\[
C_b^k(T)=[b, C_b^{k-1}(T)],\qquad C_b^1(T):=[b, T],
\]
are also under consideration. 

This topic came into the horizons of harmonic analysts due to the celebrated work \cite{CRW}, in fact there are some follow-up work quite soon, see e.g.  \cite{J, U}.
In recent years, the so-called median method (formally raised in \cite{LOR19} and refined in \cite{Hyt18}) and the approximate weak factorization method (see \cite{Hyt18}) are the main weapons to attack this problem. Both methods have their own limitations.  The median method can be applied to the higher order case, but requires the pointwise multiplier to be real-valued, while the approximate weak factorization method relaxes the restriction on the pointwise multiplier, 
but only in the first order case. 

We aim to complete the picture of this topic, that is, we will propose a new method (we call it complex median method) which is applicable not only in higher order case, but also allows the pointwise multiplier to be complex-valued. 
In order to know the details of this method, we introduce some notations. We consider two-variable 
Calder\'on-Zygmund kernels under the standard conditions 
\[
|K(x,y)|\le \frac{C}{|x-y|^n},\qquad \forall x\neq y,\]
\[
|K(x,y)-K(x',y)|+|K(y,x)-K(y,x')|\le \frac{1}{|x-y|^n} \omega \left(\frac{|x-x'|}{|x-y|}\right),
\]
whenever $|x-x'|<|x-y|/2$, where the modulus of continuity $\omega:[0,1)\mapsto [0,\infty)$ is increasing. We shall refer to such a kernel as $\omega$-Calder\'on-Zygmund kernel. The classical Calder\'on-Zygmund theory demand at least $\int_0^1 \omega(t) \frac{\ud t}t<\infty$, here for our purpose we require only $\omega(t)\to 0$ as $t\to 0$. We also consider homogeneous Calder\'on-Zygmund  kernels 
\[
K(x,y)= |x-y|^{-n} \,\Omega\left(\frac{x-y}{|x-y|}\right),
\]
where $\Omega \in L^1(\mathbb S^{n-1})$. As in \cite{Hyt18}, we consider the following two types of non-degeneracy assumptions: 
\begin{enumerate}
\item $K$ is an $\omega$-Calder\'on-Zygmund kernel with $\omega(t)\to 0$ as $t\to 0$ and there exists some $c_0>0$ such that for every $y\in \R^n$ and $r>0$, there exists $x\in B(y,r)^c$ with 
\[
|K(x,y)|\ge \frac 1{c_0 r^n}.
\]
\item $K$ is a homogeneous Calder\'on-Zygmund kernel with $\Omega\in  L^1(\mathbb S^{n-1})$
and there exists a Lebesgue point $\theta_0\in \mathbb S^{n-1}$ such that $\Omega(\theta_0)\ne 0$.
\end{enumerate}The related singular integrals will be referred as non-degenerate $\omega$-Calder\'on-Zygmund operators and non-degenerate homogeneous Calder\'on-Zygmund operators, respectively. 
A consequence of the non-degeneracy assumptions is the following 
\begin{prop}\cite[Proposition 2.2.1]{Hyt18}\label{prop:key}
Let $K$ satisfy one of the non-degeneracy assumptions as above. Then for every $A\ge 3\sqrt n$ and every cube
$Q$, there is a disjoint cube $\wt Q$ at the distance $\dist(c_Q, c_{\wt Q})\sim A\ell(Q)$ such that 
$\ell(Q)=\ell(\wt Q)$ and $|K(c_{\wt Q}, c_Q)|\sim |A\ell(Q)|^{-n}$ and for all $y_1\in Q$ and $x_1\in \wt Q$, we have 
\[
\int_Q |K(x_1,y)-K(c_{\wt Q}, c_Q)|\ud y+ \int_{\wt Q} |K(x, y_1)-K(c_{\wt Q}, c_Q)| \ud x\lesssim \eps_A A^{-n},
\]
where  $\eps_A \to 0$ as $A\to 0$ and $c_Q, c_{\wt Q}$ are the centers of $Q$ and $\wt Q$, respectively.
\end{prop}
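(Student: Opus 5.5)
We treat the two non-degeneracy assumptions separately. In each case we place $\wt Q$ at distance $\sim A\ell(Q)$ from $Q$ in a direction where the kernel is large. We then estimate the oscillation of $K$ over $Q\times\wt Q$ against the reference value $K(c_{\wt Q},c_Q)$. (Note that $\eps_A\to0$ should be read as $A\to\infty$.) Since both integrals are taken over sets of measure $\ell(Q)^n$, the target bound amounts to a pointwise or averaged oscillation of size $o(1)\cdot (A\ell(Q))^{-n}$, multiplied by $\ell(Q)^n$.

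\emph{Case (1), $\omega$-Calder\'on-Zygmund kernels.}
\begin{enumerate}
\item Apply the non-degeneracy with $y=c_Q$ and $r=A\ell(Q)$. This gives a point $x_0$ with $|x_0-c_Q|\ge A\ell(Q)$ and $|K(x_0,c_Q)|\ge c_0^{-1}(A\ell(Q))^{-n}$.
\item The size bound $|K(x_0,c_Q)|\le C|x_0-c_Q|^{-n}$ then forces $|x_0-c_Q|\le (Cc_0)^{1/n}A\ell(Q)$. Hence $|x_0-c_Q|\sim A\ell(Q)$.
\item Let $\wt Q$ be the cube of side $\ell(Q)$ centered at $x_0$. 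It is disjoint from $Q$ because $A\ge 3\sqrt n$, and $|K(c_{\wt Q},c_Q)|\sim (A\ell(Q))^{-n}$.
\item For $x_1\in\wt Q$ and $y\in Q$, write
\[
|K(x_1,y)-K(c_{\wt Q},c_Q)|\le |K(x_1,y)-K(x_1,c_Q)|+|K(x_1,c_Q)-K(c_{\wt Q},c_Q)|.
\]
\item Each term is controlled by the smoothness condition, applied in the second and first variable respectively. The smoothness condition applies because the displacement is at most $\sqrt n\ell(Q)/2$, which is less than half the distance $\gtrsim A\ell(Q)$. Each term is therefore $\lesssim (A\ell(Q))^{-n}\omega(c/A)$.
\item Integrate over $Q$ to get $\lesssim A^{-n}\omega(c/A)$. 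So $\eps_A=\omega(c/A)\to0$, and the second integral is handled symmetrically.
\end{enumerate}

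\emph{Case (2), homogeneous kernels.}
\begin{enumerate}
\item Put $c_{\wt Q}=c_Q+A\ell(Q)\theta_0$. Then $K(c_{\wt Q},c_Q)=(A\ell(Q))^{-n}\Omega(\theta_0)$, which has the right size since $\Omega(\theta_0)\ne0$.
\item For $x\in\wt Q$ and $y\in Q$, write $x-y=A\ell(Q)(\theta_0+z)$ with $|z|\le\sqrt n/A=:\delta$.
\item Compare $|x-y|^{-n}$ with $(A\ell(Q))^{-n}$. This costs $O(\delta)\,(A\ell(Q))^{-n}|\Omega((x-y)/|x-y|)|$.
\item The main term is then
\[
(A\ell(Q))^{-n}\int |\Omega(\sigma(z))-\Omega(\theta_0)|\,dz,
\]
where $\sigma(z)=(\theta_0+z)/|\theta_0+z|$ and the integral is over a region of $z$ of diameter $\lesssim\delta$.
\end{enumerate}

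The main obstacle is that in Case (2) the integral is only in one variable, $y\in Q$ with $x_1$ fixed, or vice versa. So $z$ ranges over a translate of a cube of side $1/A$ contained in the ball $|z|\le 2\delta$, and this ball is not centered at $0$. Setting $\rho=|\theta_0+z|$, the polar change of variables $z\mapsto(\sigma,\rho)$ maps this region into a spherical cap $\{|\sigma-\theta_0|\lesssim\delta\}$ times a radial interval of length $\lesssim\delta$, with Jacobian $\sim1$. The integral is then at most
\[
\delta\int_{|\sigma-\theta_0|\lesssim\delta}|\Omega(\sigma)-\Omega(\theta_0)|\,d\sigma .
\]
Because the cap has measure $\sim\delta^{n-1}$ and $\theta_0$ is a Lebesgue point, this is $o(\delta^{n})=o(A^{-n})$ when multiplied by $\ell(Q)^n$ and divided by $(A\ell(Q))^{n}$, since the $z$-region corresponds to a set of measure $\ell(Q)^n$ after rescaling. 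The error term from step 3 is bounded in the same way by $\delta\cdot\delta\cdot\fint_{\text{cap}}|\Omega|\lesssim\delta^{n}\cdot\delta\,|\Omega(\theta_0)|$, up to $o(\cdot)$, which is fine. The only delicate point is bookkeeping the Jacobian and checking that the projection of the off-center region stays inside a cap of radius $O(\delta)$ around $\theta_0$. Both follow from $|z|\le2\delta\ll1$.
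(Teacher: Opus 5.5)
Your proof is correct. The paper gives no proof of this proposition; it imports it from Hyt\"onen's work, and your argument is essentially the standard one given there: in Case (1) you use non-degeneracy plus the size bound to place $\wt Q$ and then $\omega$-smoothness, and in Case (2) you take $c_{\wt Q}=c_Q+A\ell(Q)\theta_0$, pass to polar coordinates, and invoke the Lebesgue point property uniformly over the fixed cap. The only slip is Jacobian bookkeeping in Case (2). With $\ud y=(A\ell(Q))^n\ud z$, the main term is $\int|\Omega(\sigma(z))-\Omega(\theta_0)|\ud z\lesssim\delta\int_{|\sigma-\theta_0|\lesssim\delta}|\Omega(\sigma)-\Omega(\theta_0)|\ud\sigma=o(A^{-n})$, with no extra factor $(A\ell(Q))^{-n}$. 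The error term is $\lesssim\delta^2\int_{|\sigma-\theta_0|\lesssim\delta}|\Omega|=O(A^{-n-1})$, an integral rather than an average, so the orders you conclude are right.
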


Now we can explain the key idea of this method. For instance in the first order case  the key is to connect the mean oscillation, say 
\[
\frac 1{|Q|}\int_Q |b-b_Q|,\qquad b_Q=\frac1{|Q|}\int_Q b,
\]
with $\langle [b, T] (f_Q), g_{\wt Q}\rangle$ for suitable functions $f_Q, g_{\wt Q}$ supported on $Q$ and $\wt Q$, respectively. This is quite the same like the usual median method. However, the difference is, the usual median method considers $|Q|^{-1}\int_Q |b-\alpha|$, where $\alpha$ is the median of $b$ on $\wt Q$, then $b-\alpha$ has fixed sign in a subset of $\wt Q$ with comparable size and hence one can insert  the kernel directly.  Our new method, however, simply inserts $K(c_{\wt Q}, c_Q)$ (which is a constant!) and then uses Proposition \ref{prop:key} to pass to the kernel. 
This gives us enough flexibility to remove the restriction on $b$. 

Our first main result is stated as the following.

\begin{thm}\label{thm}Let $1<p<\infty$ and $T$ be a non-degenerate $\omega$-Calder\'on-Zygmund operator or a  non-degenerate homogeneous Calder\'on-Zygmund operator. Then there holds that
\[
\|b\|_{\BMO}^k\lesssim \|C_b^k(T)\|_{L^p\to L^p}.
\]
\end{thm}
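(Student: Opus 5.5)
Fix a cube $Q$ and choose $A\ge 3\sqrt n$ large, to be fixed at the end depending only on $n$, $k$ and the kernel. Let $\wt Q$ be the cube given by Proposition \ref{prop:key}, and write $K_0:=K(c_{\wt Q},c_Q)$, so that $|K_0|\sim (A\ell(Q))^{-n}$. The plan is to show
\[
\frac1{|Q|}\int_Q |b-b_Q|^k \lesssim \|C_b^k(T)\|_{L^p\to L^p}+\eps_A\,\|b\|_{\BMO}^k ,
\]
and then take the supremum over $Q$ and absorb the last term by choosing $A$ so large that $\eps_A$ is small. The absorption is only legitimate if $\|b\|_{\BMO}<\infty$. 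I would first prove the estimate for truncations $b_N$ (truncate the real and imaginary parts at level $N$), or else run the argument with localized quantities so that only finite quantities appear. The left-hand side is at least $\big(\frac1{|Q|}\int_Q|b-b_Q|\big)^k$ by Hölder, so this yields $\|b\|_{\BMO}^k$.

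The key algebraic step is the following. Since $Q\cap\wt Q=\emptyset$, for $f$ supported on $Q$ and $g$ supported on $\wt Q$ we have
\[
\langle C_b^k(T)f,g\rangle=\int_{\wt Q}\int_Q (b(x)-b(y))^k K(x,y)f(y)g(x)\ud y\ud x .
\]
We replace $K(x,y)$ by the constant $K_0$. The error is
\[
\int_{\wt Q}\int_Q|b(x)-b(y)|^k|K(x,y)-K_0||f(y)||g(x)|\ud y\ud x .
\]
Take $|f|\le 1$ on $Q$ and $|g|\le 1$ on $\wt Q$, and use $|b(x)-b(y)|^k\lesssim |b(x)-b_{\wt Q}|^k+|b_{\wt Q}-b_Q|^k+|b(y)-b_Q|^k$. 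Each piece is then controlled by Proposition \ref{prop:key}: for the piece depending only on $x$, integrate in $y$ first, and symmetrically for the piece depending only on $y$. Combined with John--Nirenberg and $|b_{\wt Q}-b_Q|\lesssim \log A\,\|b\|_{\BMO}$, this gives an error of order $\eps_A(\log A)^k A^{-n}|Q|\,\|b\|_{\BMO}^k$. We therefore need $\eps_A(\log A)^k\to0$. This is the point I expect to be the \emph{main obstacle}. If the decay of $\eps_A$ from the proposition is not quantified, I would avoid the $\log A$ loss by comparing with $b_{\wt Q}$ only inside the integral against $|K(x,y)-K_0|$. Alternatively, I would exploit that the constant $b_{\wt Q}-b_Q$ part can be absorbed into the main term instead of being estimated.

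Main term: after replacing the kernel, the expression is $K_0\int_{\wt Q}\int_Q (b(x)-b(y))^k f(y)g(x)$, and the complex constant $K_0$ factors out. This is where the complex-valued $b$ becomes harmless. Take $g=\frac{1}{|\wt Q|}1_{\wt Q}$ and $f(y)=\overline{\sign(\alpha-b(y))^k}\,1_Q(y)$, where $\alpha$ is a suitable constant. Expanding $(b(x)-b(y))^k=((b(x)-\alpha)+(\alpha-b(y)))^k$ binomially, the term $\int_Q|\alpha-b|^k$ appears, and the cross terms involve $\int_{\wt Q}(b-\alpha)^j$. Choosing $\alpha=b_{\wt Q}$ kills the $j=1$ term. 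For $j\ge2$, instead of one test pair I would use several: take $g=\frac1{|\wt Q|}\overline{\sign(b-\alpha)}^{\,j}$-type weights and an induction on $k$. Each cross term is bounded by $\frac1{|\wt Q|}\int_{\wt Q}|b-\alpha|^j\cdot\frac1{|Q|}\int_Q|b-\alpha|^{k-j}$, and these products are handled by the lower order quantities together with Young's inequality. The lower order quantities are controlled through the identity relating $C^j_b$ to $C_b^k$, or more simply by repeating the same scheme on $\wt Q$ with the roles of the two cubes swapped. The upper bound for the commutator side is
\[
|\langle C_b^k(T)f,g\rangle|\le\|C_b^k(T)\|\,|Q|^{1/p}|\wt Q|^{1/p'-1}=\|C_b^k(T)\||Q|^{0}.
\]
Multiplying by $|K_0|^{-1}\sim A^n|Q|$ and dividing by $|Q|$ then gives the claimed inequality, with constants depending on $A$, which is fixed.
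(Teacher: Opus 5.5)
There is a genuine gap, and it is exactly where the complex-valued, higher-order difficulty lives: your treatment of the cross terms $j\ge 2$ in the main term. Take $\alpha=b_{\wt Q}$, $g=|\wt Q|^{-1}1_{\wt Q}$, and let $f$ be the phase with $(\alpha-b)^kf=|\alpha-b|^k$ on $Q$. The terms $\binom kj\langle(b-\alpha)^j\rangle_{\wt Q}\int_Q(\alpha-b)^{k-j}f$ can be of the same order as $\int_Q|b-\alpha|^k$, and can cancel it completely. For example, let $k=2$, $b=\pm1$ on the two halves of $Q$ and $b=\pm i$ on the two halves of $\wt Q$. Then $\alpha=0$, $f\equiv 1$, and
\[
\frac1{|\wt Q|}\int_{\wt Q}\int_Q(b(x)-b(y))^2\ud y\ud x=|Q|\big(\langle b^2\rangle_{\wt Q}-2\langle b\rangle_{\wt Q}\langle b\rangle_Q+\langle b^2\rangle_Q\big)=|Q|(-1-0+1)=0,
\]
although $\frac1{|Q|}\int_Q|b-b_Q|^2=1$. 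Here the $j=2$ cross term exactly cancels the $j=0$ term.

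None of your suggested repairs closes this. There is no identity or inequality bounding $C_b^j(T)$, $j<k$, by $C_b^k(T)$; the hypothesis concerns only $C_b^k(T)$. Swapping the roles of the two cubes only bounds the oscillation on $\wt Q$ by $\|C_b^k(T)\|$ plus a non-small multiple of the oscillation on $Q$. Combined with the original estimate, that is circular. Young's inequality just moves the cross terms to $C_\delta\langle|b-\alpha|^k\rangle_{\wt Q}$ with $C_\delta$ large, which cannot be absorbed. Note also that a $g$ constant on $\wt Q$ lets cancellation happen in $x$ as well as in $y$.

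The missing idea is the paper's complex median.

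\begin{itemize}
\item The centre is fixed from $b|_Q$ alone: $m_Q$ is a root of a near-minimizing monic polynomial $p_0$ for $\inf_{p\in\mathcal P}\int_Q|p(b)|$, namely the closest root on a set of measure $\ge|Q|/k$.
\item One works pointwise in $x\in\wt Q$ and integrates $|C_b^k(T)f|$ over a subset of $\wt Q$, so there is no cancellation in $x$.
\item One splits according to whether $|b(x)-m_Q|$ is much larger than, much smaller than, or comparable to $\Osc_k(b,Q)$ on a third of $\wt Q$. In the first two cases a single binomial term dominates.
\item In the comparable case there are two subcases. If $\inf_p\int_Q|p(b)|\le\eta\int_Q|b-m_Q|^k$, then $b$ is within $\eps\Osc_k(b,Q)$ of $m_Q$ on a set $F_{i_0}$ with $|F_{i_0}|\sim|Q|$, and $f=1_{F_{i_0}}$ makes all cross terms small. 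In the example, $\big|\int_{Q\cap\{b=1\}}(b(x)-b(y))^2\ud y\big|=|Q|$ for every $x\in\wt Q$. Otherwise the extremal $g\in\mathcal G_b$ of Lemma \ref{lem2} annihilates $1,b,\dots,b^{k-1}$, so $\int_Q(b(x)-b(y))^kg(y)\ud y=(-1)^k\int_Q b^kg$ has no cross terms at all.
\end{itemize}

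A secondary gap: absorbing $\eps_A\|b\|_{\BMO}^k$ needs $\|b\|_{\BMO}<\infty$ a priori. Truncation is not available, since nothing bounds $\|C_{b_N}^k(T)\|$ by $\|C_b^k(T)\|$. The paper avoids this because every error term is absorbed locally, either pointwise into $|b(x)-m_Q|^k$ or into $\Osc_k(b,Q)^k$ of the same cube. Your $\log A$ concern, by contrast, disappears once you decouple around $b_{\wt Q}$ only.
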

Here recall that 
\[
\|b\|_{\BMO}:= \sup_{Q:\, \text{cubes in $\R^n$}}\frac 1{|Q|}\int_Q |b-b_Q|.
\]
We will actually prove a stronger version of Theorem \ref{thm}--the Bloom lower bound (details will be given in Section \ref{Sec:bl}, we also send the readers to \cite{B, HLW, Hyt16, LOR19} for the backgrounds of Bloom type estimates). In some sense this already showcases the flexibility of our new method. But not only this, the readers can find that our techniques can be easily extended to multilinear and/or multi-parameter setting (see e.g. \cite{GLW,L22,LMV,LMV21,O}), and therefore removing the restriction of the pointwise multiplier should be real-valued when higher-order commutators are considered. To emphasize this point, we shall show that our method can be applied to the bi-commutator as well. 

The bi-commutator was systematically considered in \cite{AHLMO}, focusing on the necessary conditions for $b$ when $[T_1, [b, T_2]]$ is bounded. Here we will focus on the case 
\[
\|[T_1, [b, T_2]]\|_{L^2(\R^{n}\times \R^m)\to L^2(\R^{n}\times \R^m)}<\infty,
\] where $T_1, T_2$ are non-degenerate $\omega$-Calder\'on-Zygmund operators on $\R^n$ and $\R^m$, respectively. We shall provide a new proof for the necessity of rectangular product BMO membership of $b$. Strictly speaking this new proof does not really involve a 'median', but nevertheless it shares the same spirit as the proof of Theorem \ref{thm}. 

We record our second main result as the following.

\begin{thm}\label{thm:m2}
Let $T_1, T_2$ be non-degenerate $\omega$-Calder\'on-Zygmund operators on $\R^n$ and $\R^m$, respectively. Suppose that 
\[
\|[T_1, [b, T_2]]\|_{L^2(\R^{n}\times \R^m)\to L^2(\R^{n}\times \R^m)}<\infty,
\]
then necessarily $b\in \BMO_{\rm{rect}}$, i.e.
\begin{align*}
&\|b\|_{\BMO_{\rm{rect}}}:= \\
&\quad
\sup_{R=I\times J: \, \text{rectangles in }\R^{n}\times \R^m}\left(\frac 1{|R|}\int_R | b(x,y) -\langle b(\cdot, y)\rangle_I -\langle b(x, \cdot)\rangle_J +\langle b\rangle_R|^2\ud x\ud y\right)^{1/2}<\infty.
\end{align*}
Moreover,
\[
\|b\|_{\BMO_{\rm{rect}}}\lesssim \|[T_1, [b, T_2]]\|_{L^2(\R^{n}\times \R^m)\to L^2(\R^{n}\times \R^m)}.
\]
\end{thm}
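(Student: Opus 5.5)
Fix a rectangle $R=I\times J$. By duality the rectangular oscillation equals
\[
\Big(\frac1{|R|}\int_R |b-\langle b(\cdot,y)\rangle_I-\langle b(x,\cdot)\rangle_J+\langle b\rangle_R|^2\Big)^{1/2}
=\sup_{\phi}\frac1{|R|}\Big|\int_R b\,\phi\Big|,
\]
where $\phi$ ranges over functions supported on $R$ with $\frac1{|R|}\int_R|\phi|^2\le 1$ that have mean zero in each variable separately: $\int_I\phi(x,y)\ud x=0$ for a.e.\ $y$ and $\int_J\phi(x,y)\ud y=0$ for a.e.\ $x$. Indeed, the map $b\mapsto b-\langle b(\cdot,y)\rangle_I-\langle b(x,\cdot)\rangle_J+\langle b\rangle_R$ is the orthogonal projection of $L^2(R)$ onto this subspace. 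So it suffices to bound $|R|^{-1}|\int_R b\phi|$ for one such $\phi$. This works for complex $b$, since $\phi$ is simply chosen as the normalized conjugate of the projected function.

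Apply Proposition~\ref{prop:key} with a large $A$ to $I$ (for $K_1$) and to $J$ (for $K_2$), obtaining cubes $\wt I,\wt J$ and nonzero constants $\kappa_1=K_1(c_{\wt I},c_I)$ and $\kappa_2=K_2(c_{\wt J},c_J)$, with $|\kappa_1|\sim(A\ell(I))^{-n}$ and $|\kappa_2|\sim(A\ell(J))^{-m}$. This is the "complex median" idea: insert these constant kernel values. Decompose $\phi=\sum_k f_k\otimes g_k$ is awkward, so instead work directly with the tensor structure of the bi-commutator. For $f$ supported on $I\times J$ and $g$ supported on $\wt I\times\wt J$ the supports are separated in both variables, so
\[
\langle [T_1,[b,T_2]]f,g\rangle=\iint K_1(x_1,y_1)K_2(x_2,y_2)\,\big(b(x_1,x_2)-b(x_1,y_2)-b(y_1,x_2)+b(y_1,y_2)\big)f(y)g(x)\ud y\ud x .
\]
Choose $f=\phi$ on $R$ and $g=\mathbf 1_{\wt R}$, where $\wt R=\wt I\times\wt J$. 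When the kernels are replaced by the constants $\kappa_1\kappa_2$, the terms involving $b(x_1,\cdot)$ or $b(\cdot,x_2)$ integrate to zero against $\phi$ because of its vanishing partial means. What survives is $\kappa_1\kappa_2|\wt R|\int_R b\phi$, while the three terms containing $b$ evaluated at points of $\wt R$ vanish. Hence
\[
|\kappa_1\kappa_2||\wt R|\,\Big|\int_R b\phi\Big|\le |\langle [T_1,[b,T_2]]\phi,\mathbf 1_{\wt R}\rangle| + E,
\]
where $E$ collects the errors from $K_1K_2-\kappa_1\kappa_2=(K_1-\kappa_1)K_2+\kappa_1(K_2-\kappa_2)$. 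The main term is at most $\|[T_1,[b,T_2]]\|\,\|\phi\|_2|\wt R|^{1/2}\le\|[T_1,[b,T_2]]\|\,|R|$. Since $|\kappa_1\kappa_2||\wt R|\sim A^{-n-m}$, this gives $|R|^{-1}|\int_R b\phi|\lesssim A^{n+m}\|[T_1,[b,T_2]]\|+A^{n+m}E/|R|$.

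The main obstacle is controlling $E$. It involves $b$ on $\wt R$ and mixed terms, and the bound should be $\eps_A A^{-n-m}$ times rectangular oscillations of $b$ on $R$, $\wt R$ and $I\times\wt J$, $\wt I\times J$. To close the argument, first use the vanishing means of $\phi$ to replace the four-term difference of $b$ by its double-difference against the rectangular-BMO projection on the enlarged rectangle $\hat R=\hat I\times\hat J\supset R\cup\wt R$, with $\ell(\hat I)\sim A\ell(I)$ and $\ell(\hat J)\sim A\ell(J)$. Then apply Proposition~\ref{prop:key} in each variable, with Cauchy--Schwarz in the $L^2$ setting, to get
\[
E/|R|\lesssim \eps_A A^{-n-m}\,C(A)\,\|b\|_{\BMO_{\rm rect}},
\]
where $C(A)$ grows at most polynomially in $A$ from the enlargement.

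This is subtle, and it is where the absorption must be arranged. I would first prove the estimate a priori for bounded $b$ with compact support, or for $b$ truncated, so that $\|b\|_{\BMO_{\rm rect}}<\infty$. Then I would bound the error's oscillation on the enlarged rectangle without the $A$-loss, by comparing the double differences via the triangle inequality, so that the error is $\lesssim\eps_A\|b\|_{\BMO_{\rm rect}}$. Taking the supremum over $R$ gives $\|b\|_{\BMO_{\rm rect}}\le C_A\|[T_1,[b,T_2]]\|+C\eps_A\|b\|_{\BMO_{\rm rect}}$. Fixing $A$ large absorbs the last term, and a limiting argument removes the a priori assumption.
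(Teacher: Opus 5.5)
Your first half is correct and is essentially the paper's estimate \eqref{eq:ec1}. Pairing with $1_{\wt R}$, inserting $\kappa_1\kappa_2$, using $|K_1K_2-\kappa_1\kappa_2|\lesssim \eps_A A^{-n-m}|R|^{-1}$ on $\wt R\times R$, and applying Cauchy--Schwarz give
\[
\Osc_2(b,R)\lesssim A^{n+m}\|[T_1,[b,T_2]]\|+\eps_A A_R,\qquad A_R=\Big(\frac{1}{|R||\wt R|}\iint_{\wt R\times R}|B|^2\Big)^{1/2},
\]
where $B(x;y)=b(x_1,x_2)-b(x_1,y_2)-b(y_1,x_2)+b(y_1,y_2)$.

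The gap is the closing step. Global absorption needs $C\eps_A A_R\le\frac12\|b\|_{\BMO_{\rm rect}}$ for some fixed $A$. But $B$ compares values of $b$ at points roughly $A\ell$ apart, so $A_R$ is not bounded by $\|b\|_{\BMO_{\rm rect}}$ uniformly in $A$. Take $b(x_1,x_2)=f(x_1)g(x_2)$ with $f=g=\log|\cdot|$. It lies in $\BMO_{\rm rect}$, since its projection on $I\times J$ is $(f-f_I)\otimes(g-g_J)$. Yet for $I,J$ centred at the origin, $B=(f(x_1)-f(y_1))(g(x_2)-g(y_2))$ and $A_R\sim(\log A)^2$.

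This is not an artifact of Cauchy--Schwarz. For this $b$ the extremal $\phi$ is a product $\phi_1\otimes\phi_2$, and its cancellation still leaves terms like $(g_{\wt J}-g_J)\langle T_2\phi_2,1_{\wt J}\rangle$. Relative to the main term, such a term can be of size $\eps_A\log A$. Your enlargement to $\hat R$ costs $C(A)\sim A^{(n+m)/2}$, and no triangle-inequality comparison of double differences can remove a loss that is genuinely present. The theorem assumes only $\omega(t)\to0$, so $\eps_A$ (essentially $\omega(c/A)$) may decay more slowly than $1/\log A$. Then $\eps_A C(A)$ need not be small for any $A$, and the absorption cannot be arranged.

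The a priori reduction also fails, because truncation destroys the double-difference structure. For example, $b=x_1+x_2$ has vanishing bi-commutator and vanishing $\BMO_{\rm rect}$ norm, yet its truncations $b_N$ at height $N$ have $\BMO_{\rm rect}$ norm of order $N$. So $\|[T_1,[b_N,T_2]]\|$ is not controlled by $\|[T_1,[b,T_2]]\|$, and no limiting argument recovers the general case.

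The missing idea is to absorb locally, rectangle by rectangle, against $A_R$ itself, so no global norm is ever needed. The paper sets $\Sigma_R=\Osc_2(b,R)+\Osc_2(b,\wt I\times J)+\Osc_2(b,I\times\wt J)$. The four rectangles form a grid, so $A_R=A_{\wt I\times J}=A_{I\times\wt J}$. Running your duality argument on the two off-diagonal rectangles (tested on $I\times\wt J$ and $\wt I\times J$, respectively) gives $\Sigma_R\lesssim A^{n+m}\|[T_1,[b,T_2]]\|+\eps_A A_R$. If $A_R\le 2\Sigma_R$, this closes. If $A_R>2\Sigma_R$, one bounds $A_R$ directly through
\[
B=\langle B(x;\cdot)\rangle_R+\langle B(\cdot;y)\rangle_{\wt R}-B_{\wt I\times J}(x_1,y_2)-B_{I\times\wt J}(y_1,x_2)-\langle B\rangle_{\wt R\times R}.
\]
The first, second and last terms are handled by kernel insertion against $1_R$ and $1_{\wt R}$, each contributing $\lesssim A^{n+m}\|[T_1,[b,T_2]]\|+\eps_A A_R$. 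The two middle terms have $L^2$ averages exactly $\Osc_2(b,\wt I\times J)$ and $\Osc_2(b,I\times\wt J)$, both at most $\Sigma_R<A_R/2$. Absorbing yields $A_R\lesssim A^{n+m}\|[T_1,[b,T_2]]\|$, hence $\Osc_2(b,R)\le\Sigma_R<A_R/2$ is controlled, with no a priori finiteness required.
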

\noindent \textbf{Notations and conventions.}
Throughout this paper we use the notation $\langle f \rangle_E=\frac 1{|E|}\int_E f$, sometimes we may also write it as $f_E$. We write $A\lesssim B$ when there is some irrelevant constant $C>0$ such that $A\le C B$. When we write $A\sim B$ it means that $A\lesssim B$ and $B\lesssim A$ hold simultaneously. 

\vspace{0.3cm}

\noindent \textbf{Acknowledgements.}
This work is supported by the National Natural Science Foundation of China through
project numbers 12671125 and 12222114. 

\vspace{0.3cm}

\noindent \textit{AI use statement.} ChatGPT was used in order to understand $\inf_{p\in \mathcal P}\int_Q |p(b)|$ introduced in Section \ref{sec:ss2}, in particular Lemma \ref{lem2} is due to ChatGPT. All other mathematical content is solely due to the authors.

\section{Preliminaries}\label{sec:ss2}
This section is devoted to introducing some auxiliary results that will be needed in the proof of the main results. We begin with some notation about weights. 

We say a positive locally integrable function $w\in A_p(\R^n)$ if 
\[
[w]_{A_p}:=\sup_{Q:\, \text{cubes in }\R^n}\, \langle w\rangle_Q  \langle w^{-\frac 1{p-1}}\rangle_Q^{p-1} <\infty.
\]When it is clear from the context we often suppress $\R^n$ and simply write $A_p$. 
Now given $\mu, \lambda\in A_p$, we define the so-called Bloom weight $\nu:= \mu^{1/p}\lambda^{-1/p}$. 
By definition it is easy to check that $\nu \in A_2$. 
Then the weighted BMO space $\BMO_\nu$ is defined as  
\[
\|b\|_{\BMO_\nu }:= \sup_{Q:\, \text{cubes in }\R^n} \frac 1{\nu(Q)}\int_Q |b-b_Q|.
\]
The following reverse H\"older property will be frequently used
\begin{prop}\cite[Lemma 2.5]{L22}\label{pp:rh}
Let $t_i>0$ and $w_i\in  A_{p_i}$ with $1<p_i<\infty$, $i=1,2$. Then there exists a constant $C$ depending only on $[w_i]_{A_{p_i}}$ and $t_i$ such that for any cube $Q$
\[
\langle w_1\rangle_Q^{t_1} \langle w_2\rangle_Q^{t_2}\le C\langle w_1^{t_1} w_2^{t_2}\rangle_Q. 
\]
\end{prop}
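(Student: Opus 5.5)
The statement to prove is Proposition~\ref{pp:rh}: for $w_i\in A_{p_i}$ and $t_i>0$, $\langle w_1\rangle_Q^{t_1}\langle w_2\rangle_Q^{t_2}\lesssim \langle w_1^{t_1}w_2^{t_2}\rangle_Q$. The plan is to obtain this from the reverse Jensen inequality valid for $A_\infty$ weights, together with H\"older's inequality, after reducing everything to geometric means. Write $\exp\langle\log w\rangle_Q$ for the geometric mean of $w$ on $Q$.

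\textbf{Step 1 (reverse Jensen).} Since $w_i\in A_{p_i}\subset A_\infty$, I would use the characterization
\[
\langle w_i\rangle_Q\le C_i\exp\langle \log w_i\rangle_Q .
\]
Concretely, by Jensen's inequality applied to the convex function $-\log$,
\[
\langle w_i^{-1/(p_i-1)}\rangle_Q^{p_i-1}\ge \exp\bigl(-\langle\log w_i\rangle_Q\bigr).
\]
Inserting this into the definition of $[w_i]_{A_{p_i}}$ gives
\[
\langle w_i\rangle_Q\le [w_i]_{A_{p_i}}\exp\langle\log w_i\rangle_Q .
\]
So here $C_i=[w_i]_{A_{p_i}}$, and this step is essentially free.

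\textbf{Step 2 (combine).} Raise the estimate of Step 1 to the power $t_i$ and multiply the two resulting inequalities:
\[
\langle w_1\rangle_Q^{t_1}\langle w_2\rangle_Q^{t_2}\le [w_1]_{A_{p_1}}^{t_1}[w_2]_{A_{p_2}}^{t_2}\exp\bigl\langle \log (w_1^{t_1}w_2^{t_2})\bigr\rangle_Q .
\]
Jensen's inequality, now for the concave function $\log$, gives
\[
\exp\langle\log u\rangle_Q\le \langle u\rangle_Q
\]
for any positive $u$. Applying it with $u=w_1^{t_1}w_2^{t_2}$ yields the claim with $C=[w_1]_{A_{p_1}}^{t_1}[w_2]_{A_{p_2}}^{t_2}$. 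This constant depends only on $[w_i]_{A_{p_i}}$ and $t_i$, as required.

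\textbf{Main obstacle.} The only substantive input is the reverse Jensen bound of Step 1, and for $A_p$ weights it reduces directly to Jensen's inequality in the $A_p$ condition. I therefore do not expect a genuine obstacle. The points to check are that $\log w_i$ is locally integrable, which follows from $w_i, w_i^{-1/(p_i-1)}\in L^1_{\loc}$, and that all quantities are finite, so that multiplying and exponentiating the inequalities is legitimate.
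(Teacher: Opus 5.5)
Your proof is correct and complete. The paper does not prove this proposition at all; it imports it from \cite{L22}, so there is no in-paper argument to compare with, and yours is a valid self-contained replacement.

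Step~1 is exactly the reverse Jensen bound $\langle w_i\rangle_Q\le [w_i]_{A_{p_i}}\exp\langle\log w_i\rangle_Q$, obtained from the $A_{p_i}$ condition by one application of Jensen. Step~2 uses that the geometric mean is multiplicative, $\exp\langle\log(w_1^{t_1}w_2^{t_2})\rangle_Q=(\exp\langle\log w_1\rangle_Q)^{t_1}(\exp\langle\log w_2\rangle_Q)^{t_2}$, and then Jensen again. Your integrability check also holds, since $\log^+ w_i\le w_i$ and $\log^- w_i\le (p_i-1)\,w_i^{-1/(p_i-1)}$.

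What your route buys is the explicit constant $C=[w_1]_{A_{p_1}}^{t_1}[w_2]_{A_{p_2}}^{t_2}$. It also generalizes immediately: the same proof works for any finite number of factors, and for arbitrary $A_\infty$ weights if Step~1 is replaced by the reverse Jensen characterization of $A_\infty$.

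If you want to avoid logarithms, there is an equivalent discrete version. Put $\sigma_i=1/(p_i-1)$, $a=t_1(p_1-1)$ and $b=t_2(p_2-1)$. Note that $(w_1^{t_1}w_2^{t_2})^{1/(1+a+b)}(w_1^{-\sigma_1})^{a/(1+a+b)}(w_2^{-\sigma_2})^{b/(1+a+b)}\equiv 1$. H\"older's inequality with exponents $1+a+b$, $(1+a+b)/a$, $(1+a+b)/b$ then gives $1\le\langle w_1^{t_1}w_2^{t_2}\rangle_Q\,\langle w_1^{-\sigma_1}\rangle_Q^{a}\,\langle w_2^{-\sigma_2}\rangle_Q^{b}$. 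Combine this with $\langle w_i\rangle_Q^{t_i}\le[w_i]_{A_{p_i}}^{t_i}\langle w_i^{-\sigma_i}\rangle_Q^{-(p_i-1)t_i}$, which is the $A_{p_i}$ condition raised to the power $t_i$. The result is the same estimate with the same constant.
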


Now let us turn to the notations needed for our \emph{complex median method}.   Given a cube $Q$, let $p_0(z)=z^k+a_1z^{k-1}+\cdots+ a_k$ be a polynomial of degree $k$ so that 
\[
\frac 12\int_Q |p_0(b)|<\inf_{p\in \mathcal P}\int_Q\abs{p(b)}\le \int_Q |p_0(b)|,
\]
where the infimum is taken over all polynomials of degree $k$ with the coefficient of $z^k$ equal to 1, that is
  \[
  \mathcal P:= \{p(z)=z^k+a_{1} z^{k-1}+\cdots+ a_k: a_j\in \C\}.
  \]
We may write 
\[
p_0(z)=\prod_{j=1}^k (z-\xi_j),
\]
and set
\[
E_i=\{y\in Q: \min_{1\le j\le k}|b(y)- \xi_j|= |b(y)-\xi_i|\}.
\]
Then $Q= \bigcup_i E_i$ and by the pigeon hole principle there exists some $i_0$ such that $$|E_{i_0}|\ge |Q|/k.$$
In below $\xi_{i_0}$ will play the role of `complex median'. For notational convenience we set $m_Q:=\xi_{i_0}$. 

We explain what is actually defined here. In the first order case, i.e. $k=1$, 
\[
\sup_Q \inf_c \frac 1{\nu(Q)}\int_Q |b-c|
\]
is exactly another definition of the $\BMO_\nu$ norm (see e.g. \cite{L22}). In this case we can simply take $p_0(z)= z-\langle b\rangle_Q$ and hence in this case $m_Q= \langle b\rangle_Q$. In the higher order case, the natural extension 
\[
\frac 1{\nu(Q)}\inf_{p\in \mathcal P}\int_Q|p(b)|
\]
turns to be important for us, which can be seen from the following characterization 
\begin{lem}\label{lem2}
  Let $b\in L^k_{\loc}$, then there holds that
  \begin{equation}\label{eq:equivk}
   \inf_{p\in \mathcal P}\int_Q |p(b)|= \sup_{g\in \mathcal G_b} \left|\int_Q b^k g\right|,
  \end{equation}
  where 
  \[
  \mathcal G_b:=\{g: \|g\|_{L^\infty(Q)}\le 1, \int_Q g= \int_Q bg=\cdots=\int_Q b^{k-1}g=0\}.
  \]
 \end{lem}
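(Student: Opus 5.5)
This is an $L^1$--$L^\infty$ duality statement, and I would prove it by identifying the left-hand side as a distance to a finite-dimensional subspace. Set $V=\operatorname{span}\{1,b,\dots,b^{k-1}\}\subset L^1(Q)$; it is finite dimensional and consists of integrable functions, since $b\in L^k_{\loc}$ gives $b^j\in L^1(Q)$ for $j\le k$. Every $p\in\mathcal P$ satisfies $p(b)=b^k+v$ with $v\in V$, and conversely every $v\in V$ arises this way. Hence
\[
\inf_{p\in\mathcal P}\int_Q|p(b)|=\dist_{L^1(Q)}(b^k,V).
\]
The claim is therefore the standard duality formula for the distance from a point to a closed subspace of a normed space:
\[
\dist(x,V)=\sup\{|\Lambda(x)|:\ \Lambda\in X^*,\ \|\Lambda\|\le1,\ \Lambda|_V=0\}.
\]
Here $X=L^1(Q)$ with the complex scalars, and $V$ is closed because it is finite dimensional.

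\textbf{The inequality $\ge$.} This direction is elementary. For $g\in\mathcal G_b$ and any $p\in\mathcal P$, the orthogonality conditions give $\int_Q b^kg=\int_Q p(b)g$. Therefore
\[
\Bigl|\int_Q b^kg\Bigr|\le\|g\|_{L^\infty(Q)}\int_Q|p(b)|\le\int_Q|p(b)|.
\]
Taking the supremum over $g$ and the infimum over $p$ gives the bound.

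\textbf{The inequality $\le$.} Let $d=\dist(b^k,V)$. If $d=0$ there is nothing to prove, since $g=0$ is admissible. Otherwise, define $\Lambda$ on $V\oplus\C b^k$ by $\Lambda(v+tb^k)=td$. This functional vanishes on $V$ and has norm at most $1$, because for $t\ne0$ we have $\|v+tb^k\|_{L^1}=|t|\,\|b^k+v/t\|_{L^1}\ge|t|d$. The Hahn--Banach theorem (complex version) extends $\Lambda$ to all of $L^1(Q)$ with norm at most $1$. Since $Q$ has finite measure, $(L^1)^*=L^\infty$, so $\Lambda(f)=\int_Q fg$ for some $g$ with $\|g\|_\infty\le1$. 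Vanishing on $V$ means exactly $\int_Q b^jg=0$ for $0\le j\le k-1$, so $g\in\mathcal G_b$. Finally $\int_Q b^kg=d$, which gives the matching bound.

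The only points needing care are routine. The first is that the complex-scalar Hahn--Banach theorem and $L^1$--$L^\infty$ duality apply. The second is the degenerate case where $b^k\in V$, for example when $b$ takes at most $k$ values. The integrability hypothesis $b\in L^k_{\loc}$ is exactly what makes every $p(b)$ lie in $L^1(Q)$ and every pairing $\int_Q b^jg$ well defined. I expect no real obstacle beyond correctly identifying the problem as a distance to a subspace.
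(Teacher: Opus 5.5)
Your proof is correct and follows the paper's argument. Both identify the left side as $\dist_{L^1(Q)}(b^k,V)$, get one inequality from the annihilation conditions, and get the other from complex Hahn--Banach plus $L^1$--$L^\infty$ duality; the only cosmetic difference is that you define the norming functional directly on $V\oplus\C b^k$ (treating $d=0$ separately), whereas the paper defines it on the quotient $X/V$ and pulls it back to $X$.
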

\begin{proof}
Let $X=L^1(Q)$ and 
\[
V= \{a_1b^{k-1}+\cdots+ a_{k-1} b+a_k: a_j\in \C\}. 
\]
It is clear that $b^k\in X$ and $V$ is a finite dimensional closed subspace of $X$. We may set 
\[
d=  \inf_{p\in \mathcal P}\int_Q |p(b)|= \|b^k\|_{X/V}, 
\]
where $X/V$ is the usual quotient space endowed with the norm of $X$. By Hahn-Banach theorem,
there exists $\ell \in (X/V)^*$ such that $\|\ell\|\le 1$ and 
\[
\ell (b^k+V)= d.
\]
We may define $\Lambda: X \to \C$ by $\Lambda (u)= \ell(u+V)$. Hence 
\begin{equation}\label{eq:econv}
\|\Lambda\|\le 1,\quad \Lambda\big|_V=0,\quad \Lambda(b^k)=d.
\end{equation}
By Riesz's representation theorem there exists some $g_0\in L^\infty(Q)$ such that 
$$
\Lambda(u)=\int_Q u g_0,\qquad \|g_0\|_{L^\infty}=\|\Lambda\|\le 1.
$$
By \eqref{eq:econv} we have 
\[
d= \int_Q b^k g_0\le \sup_{g\in \mathcal G_b} \left|\int_Q b^k g\right|.
\]
On the other hand, it is trivial that 
\[
\sup_{g\in \mathcal G_b} \left|\int_Q b^k g\right| =
\inf_{p\in \mathcal P}\sup_{g\in \mathcal G_b} \left|\int_Q p(b) g\right|\le d.
\]
We are done.
 \end{proof}
Now with Lemma \ref{lem2} at hand, since $(z-m_Q)^k\in \mathcal P$ and recall the definition of $m_Q$,
roughly one can  at least build connection between 
\[
\frac 1{\nu(Q)}\int_Q |b-m_Q|^k\quad \text{and}\quad   \frac 1{\nu(Q)}\sup_{g\in \mathcal G_b} \left|\int_Q b^k g\right|.
\]
Recall for $g\in  \mathcal G_b$ one has 
\[
\int_Q b^k g= \int_Q (b(y)-b(x))^k g(y)\ud y= \frac 1{K(c_{\wt Q}, c_Q)}\int_Q (b(y)-b(x))^k K(c_{\wt Q}, c_Q)g(y)\ud y,
\]
then to relate it with $C_b^k(T)$ we should use Proposition \ref{prop:key}. Of course rigorously we need a careful case study. But this already clarifies how our new method works.

\section{Proof of Theorem \ref{thm}}\label{Sec:bl}
As mentioned, we shall prove the Bloom lower bound. 
That is, we will prove 
\[
\|b\|_{\BMO_{\nu^{1/k}}}\lesssim  \|C_b^k(T)\|_{L^p(\mu)\to L^p(\lambda)}^{1/k},
\]where $\nu=\mu^{1/p}\lambda^{-1/p}$ with $\mu, \lambda\in A_p$. 
In fact, by H\"older's inequality,
\begin{align*}
\int_Q |b- m_Q|&\le \Big(\int_Q |b- m_Q|^k\Big)^{1/k}|Q|^{1/{k'}}\\
&\le \Osc_k^{\nu}(b, Q) \left(\frac{\nu(Q)}{|Q|}\right)^{1/k}|Q|
\lesssim \Osc_k^{\nu}(b, Q) \nu^{1/k}(Q),
\end{align*}
where 
\[
\Osc_k^{\nu}(b, Q):=\Big(\frac 1{\nu(Q)}\int_Q |b- m_Q|^k\Big)^{1/k}
\]
and
in the last step we have used Proposition \ref{pp:rh} (with $w_1=\nu$, $w_2=1$).  Then it suffices to prove 
\begin{equation}\label{eq:fc}
\sup_Q \Osc_k^{\nu}(b, Q) \lesssim \|C_b^k(T)\|_{L^p(\mu)\to L^p(\lambda)}^{1/k}.
\end{equation}
By Proposition \ref{prop:key}, associated with $Q$, there exists $\wt Q$ such that $\ell(Q)=\ell(\wt Q)$, $\dist(c_Q, c_{\wt Q})\sim A\ell(Q)$, and 
\[
|K(c_{\wt Q}, c_Q)|\sim A^{-n} |Q|^{-1}.
\]
As suggested by the usual median method, 
the value of $b$ on $\wt Q$ is important for us.  We split the problem into the following cases:
\begin{align*}
\text{Case I:}\qquad &\big|\big\{x\in \wt Q: |b(x)-m_Q|> C_k\Osc_k(b, Q) \big \}\big|\ge |\wt Q|/3\\
\text{Case II:}\qquad &\big|\big\{x\in \wt Q: |b(x)-m_Q|< C_k^{-1}\Osc_k(b, Q) \big \}\big|\ge |\wt Q|/3\\
\text{Case III:}\qquad &\big|\big\{x\in \wt Q: C_k^{-1}\Osc_k(b, Q)\le |b(x)-m_Q|\le C_k\Osc_k(b, Q) \big \}\big|\ge |\wt Q|/3,
\end{align*}
where $C_k$ is some large constant depending only on $k$, and $\Osc_k(b, Q):=\Osc_k^1(b, Q)$.
\subsection{The Case I}
Let 
\[
F:= \big\{x\in \wt Q: |b(x)-m_Q|> C_k\Osc_k(b, Q) \big \}.
\]
For any $x\in F$, we have 
\begin{align*}
\Big|\int_Q (b(x)-&b(y))^k \ud y\Big|\\
&= \Big|\int_Q (b(x)-m_Q+m_Q-b(y))^k \ud y\Big|\\
&\ge  \Big|\int_Q (b(x)-m_Q)^k \ud y\Big|- \sum_{i=0}^{k-1}\binom{k}{i}\Big|\int_Q (b(x)-m_Q)^i(m_Q-b(y))^{k-i} \ud y\Big|\\
&\ge |b(x)-m_Q|^k |Q|- \sum_{i=0}^{k-1}\binom{k}{i}|b(x)-m_Q|^i \Osc_k(b, Q)^{k-i} |Q|.
\end{align*}
Therefore, for sufficiently large $C_k$ we have 
\[
\Big|\int_Q (b(x)-b(y))^k \ud y\Big|\ge \frac 12  |b(x)-m_Q|^k |Q|.
\]
It follows that 
\begin{align*}
\frac 12|b(x)-m_Q|^k &\le \frac 1{|K(c_{\wt Q}, c_Q)| |Q|}\Big|\int_Q (b(x)-b(y))^k K(c_{\wt Q}, c_Q)\ud y\Big|\\
&\le \frac 1{|K(c_{\wt Q}, c_Q)||Q|}\Big|\int_Q (b(x)-b(y))^k K(x,y)\ud y\Big|\\
&\qquad+ \frac 1{|K(c_{\wt Q}, c_Q)||Q|}\Big|\int_Q (b(x)-b(y))^k (K(x,y)-K(c_{\wt Q}, c_Q)) \ud y\Big|\\
&\lesssim A^n |C_b^k(T)(1_Q)(x)| + A^n \int_Q |b(x)-b(y)|^k |K(x,y)-K(c_{\wt Q}, c_Q)| \ud y.
\end{align*}
Since 
\[
|b(x)-b(y)|^k\lesssim  |b(x)-m_Q|^k + |b(y)-m_Q|^k,
\]
by Proposition \ref{prop:key} we have that 
\begin{align*}
&\int_Q |b(x)-b(y)|^k |K(x,y)-K(c_{\wt Q}, c_Q)| \ud y\\
&\hspace{3cm} \lesssim\eps_A A^{-n} |b(x)-m_Q|^k+\int_Q |b(y)-m_Q|^k |K(x,y)-K(c_{\wt Q}, c_Q)| \ud y.
\end{align*}
Hence for sufficiently large $A$ we have
\begin{align*}
|b(x)-m_Q|^k\lesssim  A^n |C_b^k(T)(1_Q)(x)|+ A^n\int_Q |b(y)-m_Q|^k |K(x,y)-K(c_{\wt Q}, c_Q)| \ud y.
\end{align*}
Take the integration over $x\in F$ on both sides we have 
\begin{align*}
C_k^k \Osc_k(b,Q)^k |Q|/3 &\le \int_F |b(x)-m_Q|^k\\
&\lesssim  A^n \|C_b^k(T)(1_Q)\|_{L^1(F)}+  \eps_A  \Osc_k(b,Q)^k |Q|,
\end{align*}
where again we have used Proposition \ref{prop:key}. Now again by choosing $A$ to be sufficiently large we finally arrive at 
\begin{align*}
\Osc_k(b,Q)^k |Q|\lesssim_A  \|C_b^k(T)(1_Q)\|_{L^1(\wt Q)}&\le  \|C_b^k(T)(1_Q)\|_{L^p(\wt Q, \lambda)}
\lambda^{1-p'}(\wt Q)^{1/{p'}}\\
&\le \|C_b^k(T)\|_{L^p(\mu)\to L^p(\lambda)}\mu(Q)^{1/p}\lambda^{1-p'}(\wt Q)^{1/{p'}}.
\end{align*}
Now that $\lambda^{1-p'}\in A_{p'}$, so we have $\lambda^{1-p'}(\wt Q)\sim \lambda^{1-p'}( Q)$
and hence 
by Proposition \ref{pp:rh} we have 
\[
\mu(Q)^{1/p}\lambda^{1-p'}(\wt Q)^{1/{p'}}\lesssim \langle \mu^{1/p} \lambda^{1-p'/{p'}}\rangle_Q |Q|= \nu(Q).
\]
Hence the claim \eqref{eq:fc} follows.

\subsection{The Case II} This case is completely similar as the previous case. In fact, let 
\[
\wt F= \big\{x\in \wt Q: |b(x)-m_Q|< C_k^{-1}\Osc_k(b, Q) \big \}.
\]
Set $$g=(\sign (b-m_Q))^k1_Q, \quad \text{where}\quad \sign (b-m_Q)=\frac{ \overline{b-m_Q}}{ |b-m_Q|}.$$ Observe that 
for any $x\in \wt F$
\begin{align*}
\Big|\int_Q (b(x)-&b(y))^k g(y)\ud y\Big|\\
&\ge \Big|\int_Q (m_Q-b(y))^k g(y)\ud y\Big|- \sum_{i=0}^{k-1}\binom{k}{i}\Big|\int_Q (b(x)-m_Q)^{k-i}(m_Q-b(y))^{i} \ud y\Big|\\
&\ge \int_Q |b(y)-m_Q|^k\ud y- \sum_{i=0}^{k-1}\binom{k}{i}|b(x)-m_Q|^{k-i} \Osc_k(b, Q)^i |Q|.
\end{align*}
Then for sufficiently large $C_k$, we have 
\[
\Big|\int_Q (b(x)-b(y))^k g(y)\ud y\Big|\ge \frac 12\int_Q |b(y)-m_Q|^k\ud y.
\]
It follows that 
\begin{align*}
\frac 12\int_Q |b(y)-m_Q|^k\ud y&\le \frac 1{|K(c_{\wt Q}, c_Q)| }\Big|\int_Q (b(x)-b(y))^k K(c_{\wt Q}, c_Q)g(y)\ud y\Big|\\
&\le \frac 1{|K(c_{\wt Q}, c_Q)|}\Big|\int_Q (b(x)-b(y))^k K(x,y)g(y)\ud y\Big|\\
&\qquad+ \frac 1{|K(c_{\wt Q}, c_Q)| }\Big|\int_Q (b(x)-b(y))^k (K(x,y)-K(c_{\wt Q}, c_Q))g(y) \ud y\Big|\\
&\lesssim A^n |Q|\cdot |C_b^k(T)(g)(x)|\\
&\qquad+ A^n |Q| \int_Q |b(x)-b(y)|^k | K(x,y)-K(c_{\wt Q}, c_Q)|\ud y.
\end{align*}
Since 
\begin{align*}
|b(x)-b(y)|^k \lesssim |b(x)-m_Q|^k + |b(y)-m_Q|^k,
\end{align*}
it follows from Proposition \ref{prop:key} that 
\begin{align*}
&\int_Q |b(x)-b(y)|^k | K(x,y)-K(c_{\wt Q}, c_Q)|\ud y\\
&\lesssim  \eps_A A^{-n}  |b(x)-m_Q|^k+ \int_Q |b(y)-m_Q|^k  | K(x,y)-K(c_{\wt Q}, c_Q)|\ud y\\
&\le  \eps_A A^{-n}  C_k^{-1}\Osc_k(b,Q)^k + \int_Q |b(y)-m_Q|^k  | K(x,y)-K(c_{\wt Q}, c_Q)|\ud y.
\end{align*}
Combining the above, we see that 
for sufficiently large $A$ we have
\begin{align*}
\Osc_k(b, Q)^k \lesssim A^n  |C_b^k(T)(g)(x)|+ A^n \int_Q |b(y)-m_Q|^k | K(x,y)-K(c_{\wt Q}, c_Q)|\ud y.
\end{align*}
Taking the $L^1(\wt F)$ norm on both sides and then by similar arguments as the previous case we will get the desired estimate. 

\subsection{The Case III}This is the most tricky case. And it is actually the critical reason why we introduce the polynomial $p_0$ in the very beginning. We begin with an additional assumption that 
\[
\inf_{p\in \mathcal P}\int_Q\abs{p(b)}= \sup_{g\in \mathcal G_b}\Babs{\int b^kg}\le \eta \int_Q |b-m_Q|^k,
\]
where $\eta$ is some small constant which will be determined very soon. In particular, this means 
\[
\int_Q \prod_{j=1}^k |b- \xi_j|\le 2\eta \int_Q |b-m_Q|^k,
\]
where recall that $|p_0(z)|=\prod_{j=1}^k |z- \xi_j|$ and 
\[
\frac 12\int_Q |p_0(b)|<\inf_{p\in \mathcal P}\int_Q\abs{p(b)}\le \int_Q |p_0(b)|.
\]
Set 
\[
F_{i_0}=\{y\in E_{i_0}:\abs{b(y)-\xi_{i_0}}<\varepsilon \Osc_k(b, Q)\},
\]
where $\eps<1$ is a small constant which will be chosen later, also recall that $\xi_{i_0}=m_Q$ and $$|E_{i_0}|=|\{y\in Q: \min_{1\le j\le k}|b(y)- \xi_j|= |b(y)-\xi_{i_0}|\}|\ge |Q|/k.$$ 
Then 
\begin{align*}
\abs{E_{i_0}\setminus F_{i_0}}&\leq\frac{1}{\varepsilon^k  \Osc_k(b, Q)^k}\int_{E_{i_0}}\abs{b-\xi_{i_0}}^k\\
&\leq\frac{1}{\varepsilon^k  \Osc_k(b, Q)^k}\int_{E_{i_0}} \prod_{j=1}^k |b- \xi_j|\\
&\le \frac{2\eta }{\varepsilon^k  \Osc_k(b, Q)^k}\Osc_k(b, Q)^k |Q|\le \frac{2\eta k}{\eps^k }|E_{i_0}|.
\end{align*}
Hence, by taking  $\eta\ll\varepsilon^k$, it follows that $\abs{F_{i_0}}\sim\abs{E_{i_0}}\sim |Q|$. At this point let us define 
\[
 G :=\big\{x\in \wt Q: C_k^{-1}\Osc_k(b, Q)\le |b(x)-m_Q|\le C_k\Osc_k(b, Q) \big \},
\]
in our case we have $|G|\ge |Q|/3$. This means that for any $x\in G$ we can compute as the following
\begin{align*}
|b(x)-m_Q|^k |F_{i_0}| &=\Big|\int_{F_{i_0}}(b(x)-m_Q)^k\ud y\Big|\\
&\le \Big|\int_{F_{i_0}}(b(x)-b(y))^k\ud y\Big|+  \sum_{i=1}^{k}\binom{k}{i} \int_{F_{i_0}} |b(x)-b(y)|^{k-i}|b(y)-m_Q|^{i} \ud y.
\end{align*}
By the definition of $G$ and $F_{i_0}$ (and that $\xi_{i_0}=m_Q$), 
\[
|b(x)-b(y)|\le |b(x)-m_Q|+|b(y)-m_Q|\le (C_k+1)\Osc_k(b, Q), 
\]we have
\begin{align*}
\sum_{i=1}^{k}\binom{k}{i} \int_{F_{i_0}} |b(x)-b(y)|^{k-i}|b(y)-m_Q|^{i} \ud y\le  (C_k+1)^k 2^k \eps\Osc_k(b, Q)^k |F_{i_0}|.
\end{align*}
Let $\eps= (C_k+1)^{-2k}2^{-k-1}$, we finally obtain 
\[
\Osc_k(b, Q)^k \lesssim |F_{i_0}|^{-1}\Big|\int_{F_{i_0}}(b(x)-b(y))^k\ud y\Big|\sim |Q|^{-1}\Big|\int_{F_{i_0}}(b(x)-b(y))^k\ud y\Big|\
\]
The rest arguments are again quite similar as Case II, and we omit the details. We are left with the case 
\[
\inf_{p\in \mathcal P}\int_Q\abs{p(b)}= \sup_{g\in \mathcal G_b}\Babs{\int b^kg}\ge \eta \int_Q |b-m_Q|^k.
\]
In this case, we may choose a specific $g\in \mathcal G_b$ so that 
\begin{align*}
\int_Q |b-m_Q|^k &\le 2\eta^{-1}\Big|\int_Q b(y)^k g(y) \ud y\Big|\\
&= 2\eta^{-1}\Big|\int_Q (b(x)-b(y))^k g(y) \ud y\Big|.
\end{align*}
Again we are in similar situation as above, we are done. 
\begin{rem}
We remark that in the first order case the proof can be much simpler. Indeed, in the first order case one just needs to split the problem into two cases: 
\[
\big|\big\{x\in \wt Q: |b(x)-m_Q|>  \Osc_1(b, Q) \big \}\big|\ge |\wt Q|/2\\
\]
and 
\[
\big|\big\{x\in \wt Q: |b(x)-m_Q|\le \Osc_1(b, Q) \big \}\big|\ge |\wt Q|/2.
\]
The reason is that we do not need to use the binomial expansion.  
\end{rem}

\section{Proof of Theorem \ref{thm:m2}}
This section is devoted to proving Theorem \ref{thm:m2}. We begin with some notations. Let 
\[
\Osc_2(b,R):= \left(\frac 1{|R|}\int_R | b(x,y) -\langle b(\cdot, y)\rangle_I -\langle b(x, \cdot)\rangle_J +\langle b\rangle_R|^2\ud x\ud y\right)^{1/2}. 
\]
Fix a rectangle $R=I\times J$, using Proposition \ref{prop:key} for $K_1$ and $K_2$ will give us $\wt I$ and $\wt J$, respectively. Here $K_i$ stands for the kernel of $T_i$. 
Now in this bi-commutators setting the kernel will involve 
\[
B(x,y; u,v)= b(x,y)- b(x, v)-b(u, y)+b(u,v),\quad (x,y; u,v)\in \wt I \times \wt J \times I \times J.
\]
Hence understanding $B(x,y; u,v)$ will help us to formulate the median method in this setting. Recall that previously we only need to handle $b(x)-b(y)$, and we simply write $$b(x)-b(y)=(b(x)-m_Q)+(m_Q-b(y)).$$
So we have seen that the key is to decouple the variables, say $(x,y)$ and $(u,v)$. This inspires us to write 
\begin{align*}
B(x,y; u,v)&=\langle B(x,y; \,\cdot\, ,\,\cdot\,)\rangle_R + \langle B(\,\cdot\, ,\,\cdot\,; u,v)\rangle_{\wt R}
- B_{\wt I \times J}(x,v)- B_{I\times \wt J}(u, y)-\langle B\rangle_{\wt R\times R}\\
&=: \sum_{i=1}^5 B_i,
\end{align*}
where we have used the notation e.g.  
\[
B_{\wt I\times J}(x,v):= b(x,v) -\langle b(\cdot, v)\rangle_{\wt I} -\langle b(x, \cdot)\rangle_J +\langle b\rangle_{\wt I\times J}
\]
for $(x,v)\in \wt I\times J$. Careful readers may notice that we are not able to decouple the variables completely, but somehow it is already enough for our purpose.

Now we are in the position to introduce a suitable case study in the bi-commutator setting. Roughly speaking, since the rectangular product BMO norm is defined via $\Osc_2(b, R)$, which is an $L^2$ average, one cannot expect pointwise criteria for the case study. Instead, we care about the relation between 
\[
A_R:=\Big(\frac 1{|R||\wt R|}\iint_{\wt R \times R}|B(x,y;u,v)|^2\ud x\ud y\ud u \ud v\Big)^{1/2}
\]
and 
\[
\Sigma_R:=\Osc_2(b,R)+ \Osc_2(b,\wt I\times J)+\Osc_2(b, I\times \wt J).
\]

\subsection{The case $A_R> 2\Sigma_R$}
In this case we have 
\begin{equation}\label{eq:ar0}
A_R \le \sum_{i=1}^5\Big(\frac 1{|R||\wt R|}\iint_{\wt R \times R}|B_i|^2 \Big)^{1/2}. 
 \end{equation}
 First of all, we can write $|B_1|$ as 
 \begin{align*}
 |B_1|&= \Big|\frac 1{|R|}\int_R B(x, y; u, v) \ud u \ud v\Big|\\
 &=  \frac 1{|R| |K_1(c_{\wt I}, c_I)K_2(c_{\wt J}, c_J)|}\Big|\int_R B(x, y; u, v) K_1(c_{\wt I}, c_I)K_2(c_{\wt J}, c_J)\ud u \ud v\Big|\\
 &\lesssim A^{n+m} \Big| \int_R B(x, y; u, v) K_1(x, u)K_2(y, v)\ud u \ud v\Big|\\
 &\quad+ A^{n+m}\Big| \int_R B(x, y; u, v) (K_1(c_{\wt I}, c_I)K_2(c_{\wt J}, c_J)-K_1(x, u)K_2(y, v)) \ud u \ud v\Big|. 
 \end{align*}
By the regularity and size assumption of the kernel, we have that  
\begin{align*}
&|K_1(c_{\wt I}, c_I)K_2(c_{\wt J}, c_J)-K_1(x, u)K_2(y, v)|\\
&\le |K_1(c_{\wt I}, c_I)||K_2(c_{\wt J}, c_J)-K_2(y, v)|+ |K_2(y, v)| |K_1(c_{\wt I}, c_I)-K_1(x, u) |\\
&\lesssim \eps_A A^{-n-m}|R|^{-1}.
\end{align*}
Hence 
\begin{equation}\label{eq:eb1}
|B_1|\lesssim A^{n+m}\big|[ [b, T_1], T_2](1_R)\big|+ \eps_A \langle |B(x,y, u,v)|\rangle_R. 
\end{equation}
The estimate of $B_2$ will follow in a similar fashion. In fact, one just needs to interchange the role of $R$ and $\wt R$. Thus we get 
\begin{equation}\label{eq:eb2}
|B_2|\lesssim A^{n+m}\big|[ [b, T_1], T_2](1_{\wt R})\big|+ \eps_A \langle |B(x,y, u,v)|\rangle_{\wt R}. 
\end{equation}
To deal with $B_3$, we simply notice that  
\begin{equation}\label{eq:eb3}
\Big(\frac 1{|R||\wt R|}\iint_{\wt R \times R}|B_3|^2 \Big)^{1/2}= \Osc_2(b,\wt I\times J).
\end{equation}
Likewise
\begin{equation}\label{eq:eb4}
\Big(\frac 1{|R||\wt R|}\iint_{\wt R \times R}|B_4|^2 \Big)^{1/2}= \Osc_2(b,I\times \wt J).
\end{equation}
Finally, we just view $\langle B\rangle_{\wt R\times R}$ as $\langle B_1\rangle_{\wt R}$, hence we have
\begin{equation}\label{eq:eb5}
|B_5|\lesssim  A^{n+m}\big\langle \big|[ [b, T_1], T_2](1_R)\big|\big\rangle_{\wt R}+ \eps_A \langle |B |\rangle_{\wt R\times R}.
\end{equation}
Substitute the estimates \eqref{eq:eb1}, \eqref{eq:eb2}, \eqref{eq:eb3}, \eqref{eq:eb4}, \eqref{eq:eb5} into \eqref{eq:ar0} and using H\"older's inequality we have 
\begin{align*}
A_R &\le C A^{n+m} |R|^{-1/2}\Big(2\|[ [b, T_1], T_2](1_R)\|_{L^2(\wt R)} +\|[ [b, T_1], T_2](1_{\wt R})\|_{L^2( R)}  \Big)\\
&\qquad+ 3C \eps_A A_R + \Osc_2(b,\wt I\times J)+ \Osc_2(b,I\times \wt J).
\end{align*}
Let $A$ be sufficiently large so that $3C \eps_A<1/4$, on the other hand 
\[
 \Osc_2(b,\wt I\times J)+ \Osc_2(b,I\times \wt J)\le \Sigma_R <\frac 12 A_R.
\]
Then we finally obtain that 
\begin{align*}
\Osc_2(b,R) &\le \Sigma_R < A_R/2 \\
&\lesssim A^{n+m} |R|^{-1/2}\Big(2\|[ [b, T_1], T_2](1_R)\|_{L^2(\wt R)} +\|[ [b, T_1], T_2](1_{\wt R})\|_{L^2( R)}  \Big)\\
&\lesssim_A  \|[ [b, T_1], T_2] \|_{L^2 \to L^2}.
\end{align*}

\subsection{The case $A_R\le 2\Sigma_R$}
By Riesz's representation theorem, there exists some $g\in L^2(R)$ with $\|g\|_{L^2(R)}=1$ such that 
\begin{align*}
\Osc_2(b, R)&= \frac 1{|R|^{1/2}} \int_R B_R (u,v) g(u, v)\ud u \ud v\\
&= \frac 1{|R|^{1/2}} \int_R B_R (u,v)  g_R(u,v) \ud u\ud v\\
&= \frac 1{|R|^{1/2}} \int_R b (u,v)  g_R(u,v) \ud u\ud v\\
&=  \frac 1{|R|^{1/2}} \int_R B(x,y; u,v) g_R(u,v) \ud u\ud v,
\end{align*}
where $(x, y)\in \wt R$ is arbitrary and 
\[
g_R(u,v) = g(u,v) -\langle g(\cdot, v)\rangle_I - \langle g(u, \cdot)\rangle_J + \langle g\rangle_R. 
\]
Then pretty much like the way we handled with $B_1$, we have 
\begin{align*}
\Osc_2(b, R)&\lesssim |R|^{1/2} A^{n+m}\big|[ [b, T_1], T_2](g_R)(x,y)\big| \\
&\qquad+ \eps_A |R|^{-1/2} \int_R |B(x, y; u, v)| |g_R(u,v)|  \ud u \ud v\\
&\le  |R|^{1/2} A^{n+m}\big|[ [b, T_1], T_2](g_R)(x,y)\big| + \eps_A |R|^{-1/2} \Big(\int_R | B(x, y; u, v)|^2 \ud u\ud v\Big)^{1/2}.
\end{align*}
Taking the $L^2(\wt R)$ norm on both sides we have 
\begin{equation}\label{eq:ec1}
\Osc_2(b, R)\lesssim  A^{n+m}\|[ [b, T_1], T_2](g_R)\|_{L^2(\wt R)}+ \eps_A A_R.
\end{equation}
Similarly, we have 
\begin{equation}\label{eq:ec2}
\begin{split}
\Osc_2(b, \wt I \times J)&\lesssim  A^{n+m}\|[ [b, T_1], T_2](g_{\wt I \times J})\|_{L^2(I \times \wt J)}+ \eps_A A_R,\\
\Osc_2(b, I \times \wt J)&\lesssim  A^{n+m}\|[ [b, T_1], T_2](g_{I \times \wt J})\|_{L^2(\wt I \times J)}+ \eps_A A_R,
\end{split}
\end{equation}
where $g_{\wt I \times J}$ and $g_{ I \times \wt J}$ are defined in a similar way as $g_R$, and  we have used the obvious observation 
\[
A_R= A_{\wt I \times J}= A_{I\times \wt J}.
\]
Combining \eqref{eq:ec1} and \eqref{eq:ec2} we obtain 
\begin{align*}
\Sigma_R &\le C A^{n+m} \Big( \|[ [b, T_1], T_2](g_R)\|_{L^2(\wt R)}+ \|[ [b, T_1], T_2](g_{\wt I \times J})\|_{L^2(I \times \wt J)}\\
&\qquad+\|[ [b, T_1], T_2](g_{I \times \wt J})\|_{L^2(\wt I \times J)}\Big)+ 3C \eps_A A_R.
\end{align*}
Let $A$ be sufficiently large so that 
\[
3C \eps_A A_R< \frac 14 A_R \le \frac 12 \Sigma_R, 
\]
we obtain 
\[
\Osc_2(b, R) \le \Sigma_R\lesssim_A   \|[ [b, T_1], T_2] \|_{L^2 \to L^2}.
\]
This completes the proof.

\end{document}